\documentclass[12pt, a4paper, parskip=half, abstracton]{scrartcl}

\usepackage{array}
\usepackage{marginnote}
\usepackage{xcolor}
\usepackage{amscd,amssymb,amsfonts,amsmath,latexsym,amsthm}
\usepackage{hyperref}
\usepackage[all,cmtip]{xy}
\usepackage{mathrsfs}
\usepackage{graphicx}
\usepackage{bm} % for bold symbols in math mode \boldsymbol{}
\usepackage[nameinlink, capitalise]{cleveref}
\usepackage{etoolbox}
\def\hB{\hspace*{\fill}$\qed$}% \newline\noindent}

\usepackage[nottoc]{tocbibind} % Bibliography im toc

\usepackage{defs_pp1}
\usepackage{slashed}
\usepackage[utf8]{inputenc}
\usepackage{microtype}
\usepackage[english]{babel}
\usepackage{mathtools}
\usepackage{bm}
\usepackage{esvect}

\title{$G$-categories, bootstrap classes and UCT}
\author{
Ulrich Bunke\thanks{Fakult{\"a}t f{\"u}r Mathematik,
Universit{\"a}t Regensburg,
93040 Regensburg, 
GERMANY, \newline
ulrich.bunke@mathematik.uni-regensburg.de}  and 
 Benjamin Dünzinger \thanks{Fakult{\"a}t f{\"u}r Mathematik,
Universit{\"a}t Regensburg,
93040 Regensburg,
GERMANY, 
Benjamin.Duenzinger@mathematik.uni-regensburg.de %\newline
}
}

\numberwithin{equation}{section}
\newtheorem{theorem}{Theorem}					% Removed [section]!
\newtheorem{prop}[theorem]{Proposition}

\newtheorem{ddd}[theorem]{Definition}
\newtheorem{kor}[theorem]{Corollary}

\newtheorem{prob}[theorem]{Problem}

\theoremstyle{remark}
\theoremstyle{definition}

\newtheorem{rem}[theorem]{Remark}

\newcommand{\ee}{\mathrm{e}}

\newcommand{\fin}{\mathrm{fin}}

\newcommand{\EE}{\mathrm{E}}
\newcommand{\sepa}{\mathrm{sep}}

\newcommand{\nCalg}{C^{*}\mathbf{Alg}^{\mathrm{nu}}}

\newcommand{\Coind}{\mathrm{Coind}}

\newcommand{\Res}{\mathrm{Res}}

\newcommand{\Orb}{\mathbf{Orb}}

\newcommand{\CAlg}{{\mathbf{CAlg}}}

\newcommand{\ben}[1]{\textcolor{blue}{#1}}

\newcommand{\PSh}{{\mathbf{PSh}}}

 \newcommand{\Cat}{{\mathbf{Cat}}}

\newcommand{\Calg}{{\mathbf{C}^{\ast}\mathbf{Alg}}}

\renewcommand{\Pr}{\mathbf{Pr}}

\newcommand{\op}{\mathrm{op}}

\newcommand{\kk}{\mathrm{kk}}
\newcommand{\KK}{\mathrm{KK}}

\newcommand{\st}{\mathrm{st}}

\begin{document}  \maketitle \begin{abstract}  
 For finite groups $G$, we discuss the enrichment of equivariant $KK$- and $E$-theory in genuine $G$-spectra. 	\end{abstract}

  \tableofcontents
  
  \section{Introduction}In this note, we explain that equivariant $KK$- and $E$-theory for finite groups $G$ is naturally enriched in genuine $G$-spectra, or equivalently, spectral $G$-Mackey functors \cite{bw}. We further describe the corresponding bootstrap classes and derive universal coefficient and Künneth formulas. These results are formal consequences of the fact that equivariant $KK$- and $E$-theory refine to parametrized categories over the orbit category of $G$.
  
  \section{The bootstrap class and a Schwede-Shipley type theorem}
  
   Let $ \CAlg(\Pr^{L}_{\st})$ denote the $\infty$-category of presentably symmetric monoidal stable $\infty$-categories and left adjoint functors. In the following, we recall a classical fact (an $\infty$-categorical version of the Schwede-Shipley theorem \cite{Schwede_2003}), which holds for any object of $ \CAlg(\Pr^{L}_{\st})$ with a compact tensor unit.

 For concreteness, we formulate it as \cref{kopgwgrewfrw} for $\KK$ in $ \CAlg(\Pr^{L}_{\st})$ (equipped with either the minimal or maximal tensor product) representing the $KK$-theory of $C^{*}$-algebras. An analogous statement with an identical proof also holds for $\EE$ in $ \CAlg(\Pr^{L}_{\st})$ equipped with the maximal tensor product, or any other $\cC$ in $ \CAlg(\Pr^{L}_{\st})$ with a compact tensor unit. 
References for the construction of such versions of $\KK$ and $\EE$ are \cite{KKG}, \cite{Bunke:2023aa}, and \cite{Bunke:2026aa}. {We work here with the convention that $\KK = \Ind_{\aleph_1}(\KK_{\sepa})$ and $\EE = \Ind_{\aleph_1}(\EE_{\sepa})$. This follows the convention of \cite{Bunke:2026aa} and differs from the convention of \cite{KKG}. Furthermore, the canonical functors
\[ 
	\kk: \nCalg \to \KK \quad \text{and} \quad \ee: \nCalg \to \EE
\]
are given by applying the functor $\Ind_{\aleph_1}(-)$ to 
\[ 
\kk_\sepa: \nCalg_\sepa \to \KK_\sepa \quad \text{and} \quad \ee_\sepa: \nCalg_\sepa \to \EE_\sepa \ .
\]
This has the consequence that both $ \kk $ and $\ee$ map arbitary sums of $C^\ast$-algebras to coproducts. We will explain this choice in greater detail in \Cref{bobwaggqqihqffasf}.  
}

For any cocommutative coalgebra $A$ in $\KK$, the functor $$\map_{\KK}(A,-):\KK\to \Sp$$ has a lax symmetric monoidal refinement \cite{LurieHA}. An example of a cocommutative coalgebra (and also a commutative algebra) in $\KK$ is the tensor unit $\beins$. 
The endomorphism spectrum $$KU:=\map_{\KK}(\beins,\beins)$$ consequently admits the structure of a commutative ring spectrum,
and $\KK$ has a canonical enrichment in $\Mod_{KU}(\Sp)$. Formally, this can be expressed by stating that
$\KK$ in $\CAlg(\Pr^{L}_{\st})$ refines to an object of $ \CAlg( \Mod_{\Mod_{KU}(\Sp)}(\Pr^{L}_{\st}) )$. In particular,
 the lax symmetric monoidal functor $\map_{\KK}(\beins,-):\KK\to \Sp$ naturally refines to a lax symmetric monoidal functor
$$K:\KK\to \Mod_{KU}(\Sp)\ .$$

\begin{ddd} We define the bootstrap class $\cB$ as the localizing subcategory of $  \KK$ generated by $\beins$. \end{ddd} 
\begin{theorem}  \label{kopgwgrewfrw}\mbox{}
\begin{enumerate}
  \item  \label{kogwperfwf} For $A$ in $\cB$ and $B$ in $\KK$, the functor $K$ induces an equivalence
$$\map_{\KK}(A,B)\simeq \map_{\Mod_{KU}(\Sp)}(K(A),K(B))$$ in $\Mod_{KU}(\Sp)$.
\item \label{kgowperwrfwerfwef} The functor $K$ restricts to an equivalence $K_{|\cB}:\cB\stackrel{\simeq}{\to} \Mod_{KU}(\Sp)$.

\item \label{kgowperwrfwerfwee33f}  The functor $K$ is the right adjoint of a right Bousfield localization
$$b:\Mod_{KU}(\Sp)\rightleftarrows \KK:K\ .$$
\item \label{kogwperfwf1}  For $A$ in $\cB$ and $B$ in $\KK$, the canonical morphism $$K(A)\otimes_{KU} K( B) \to K(A\otimes B)$$ is an equivalence.
\end{enumerate}
\end{theorem}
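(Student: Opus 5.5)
The plan is to prove everything by a localizing-subcategory argument built on the compactness of $\beins$, the fact that $K$ preserves colimits, and the $KU$-linear structure of $\KK$. Because $\KK$ is a module over $\Mod_{KU}(\Sp)$ in $\Pr^{L}_{\st}$, there is a colimit-preserving, $KU$-linear functor
\[ b:=-\otimes_{KU}\beins:\Mod_{KU}(\Sp)\to \KK . \]
It sends $KU$ to $\beins$, and its right adjoint is $\map_{\KK}(\beins,-)$ viewed as a $KU$-module, which is $K$. Since $\beins$ is compact in $\KK$, the functor $K$ preserves filtered colimits. Being exact, $K$ therefore preserves all colimits.

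For \cref{kogwperfwf}, fix $B$. The natural transformation $\map_{\KK}(A,B)\to \map_{\Mod_{KU}(\Sp)}(K(A),K(B))$ is built from the enrichment. Both sides send colimits in $A$ to limits: the left side tautologically, and the right side because $K$ preserves colimits. The map is also compatible with shifts. So the class of $A$ for which it is an equivalence is localizing. For $A=\beins$ the map is $\map_{\KK}(\beins,B)\to \map_{KU}(KU,K(B))\simeq K(B)$, which is the identity. Hence it is an equivalence on all of $\cB$.

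For \cref{kgowperwrfwerfwef}, \cref{kogwperfwf} gives that $K_{|\cB}$ is fully faithful. Its essential image is a localizing subcategory of $\Mod_{KU}(\Sp)$, since $K$ preserves colimits and $\cB$ is closed under colimits. This image contains $K(\beins)=KU$, which generates $\Mod_{KU}(\Sp)$, so $K_{|\cB}$ is essentially surjective. For \cref{kgowperwrfwerfwee33f}, the unit $M\to Kb(M)$ is a natural transformation between colimit-preserving functors that is an equivalence at $M=KU$, because $Kb(KU)=K(\beins)=KU$. It is therefore an equivalence everywhere. So $b$ is fully faithful with essential image the localizing subcategory generated by $b(KU)=\beins$, namely $\cB$. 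This exhibits $(b,K)$ as a right Bousfield localization with counit $bK(B)\to B$ the colocalization onto $\cB$. The main point to check carefully is that the abstractly defined $b$ really has $K$ as its right adjoint, compatibly with the $KU$-enrichment. This is where the refinement of $\KK$ to $\CAlg(\Mod_{\Mod_{KU}(\Sp)}(\Pr^{L}_{\st}))$ is used, via the internal-hom description of the right adjoint of $-\otimes\beins$.

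For \cref{kogwperfwf1}, the lax monoidal structure of $K$ gives the natural map $K(A)\otimes_{KU}K(B)\to K(A\otimes B)$. Fix $B$ and view both sides as functors of $A$. They preserve colimits: the left side because $K$ does and $\otimes_{KU}$ preserves colimits in each variable, and the right side because $\otimes$ in $\KK$ preserves colimits in each variable (we are in $\CAlg(\Pr^{L}_{\st})$) and $K$ does. For $A=\beins$ the map is the unit constraint $KU\otimes_{KU}K(B)\simeq K(B)$, an equivalence. A localizing-subcategory argument then finishes the proof.
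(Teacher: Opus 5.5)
Your proposal is correct and follows essentially the same route as the paper. Compactness of $\beins$ makes $K$ colimit-preserving, the case $A=\beins$ is tautological, and all four assertions extend along colimits to the localizing subcategory generated by $\beins$; the only cosmetic difference is that you realize $b$ explicitly as the action $M\mapsto M\otimes_{KU}\beins$ and check the unit on the generator $KU$, whereas the paper takes the abstract left adjoint and reads off $b(KU)\simeq\beins$ from the first assertion. In that step, say that the unit map $KU\to K(\beins)$ itself is an equivalence (it corresponds to the identity of $\beins$), not merely that $Kb(KU)\simeq KU$ as objects.
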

Assertion \ref{kogwperfwf} represents the universal coefficient theorem (UCT), and Assertion \ref{kogwperfwf1} represents the Künneth formula.
 \begin{proof}[Sketch of proof]
 Since $K$ is corepresented, it preserves limits and is a right adjoint.
 Since $\beins$ is a compact object in $\KK$ (see \cite[Section 7]{Bunke:2026aa}), the functor $K$ preserves colimits. For any $B$ in $\KK$, the functor $K$ induces the identity
 $$K(B)\simeq \map_{\KK}(\beins,B)\stackrel{\simeq}{\to} \map_{\Mod_{KU}(\Sp)}(KU,K(B))\simeq K(B)\ .$$
This proves Assertion \ref{kogwperfwf} for $A=\beins$. The full assertion follows by extension along colimits.
This shows, in particular, that $K_{|\cB}$ is fully faithful. Since $K(\beins)\simeq KU$ and $K$ preserves colimits,
we conclude that $K_{|\cB}$ is essentially surjective, which implies Assertion \ref{kgowperwrfwerfwef}.
     Furthermore, it follows from \ref{kogwperfwf} that the left adjoint $b$ of $K$ maps $KU$ to $\beins$ and therefore maps $\Mod_{KU}(\Sp)$ fully faithfully into $\cB$. This establishes Assertion \ref{kgowperwrfwerfwee33f}.
     Finally, Assertion \ref{kogwperfwf1} is evident for $A=\beins$, and the general case again follows by extension along colimits. 
  \end{proof}
  
  {
  \begin{rem}\label{bobwaggqqihqffasf}
 In advantage of the conventions for $KK$-theory used in the present note, compared with the ones from  \cite{KKG}, is that the intersection of the  bootstrap class with the $\aleph_{1}$-compact objects coincides with the classical bootstrap class. To explain this, let us denote the $KK$-theory as considered in \cite{KKG} by   $\overline{\KK}:= \Ind(\KK_{\sepa})$.  {W}e consider the functor
  	\[ 
  		\overline{\kk}: \nCalg_\sepa \to \KK_\sepa \xrightarrow{y} \Ind(\KK_\sepa) = \overline{\KK} \ .
  	\]
  	Then, $\overline{\kk} {(C_0(\nat))}$ does not lie in the bootstrap class in $ \overline{\KK} $. {Assuming the contrary, the object} $\kk_\sepa {(C_0(\nat))}$  {would lie} in the smallest idempotent complete stable subcategory  {of $\KK_{\sepa}$} containing the tensor unit, as  $\overline{\kk}{(C_0(\nat)})$ is compact. {Then  
	 the homotopy groups of $K(C_0(\nat)) $ were}  finitely generated, which is clearly a contradiction.
  	
	This problem does not appear when working with $\KK$.  {Indeed, the object $\kk(C_0(\nat)) \simeq \bigoplus_\nat \beins$
	  clearly belongs to the bootstrap class of $\KK$.} \hB
  \end{rem}
	}

\section{$G$-categories and Mackey functors}
 
  In the following, we explain how \cref{kopgwgrewfrw} can be extended to 
 equivariant $KK$-theory for finite groups. The natural framework is 
 the theory of parametrized categories over $G\Orb$. The parametrized version of a presentably symmetric monoidal stable $\infty$-category is a 
 $G$-presentably symmetric monoidal $G$-stable $\infty$-category.
  Following \cite{Cnossen:2024aa}:
   \begin{ddd}\label{kopgwgwegw} A $G$-presentably symmetric monoidal $G$-stable $\infty$-category is a functor
 $\cC:\PSh(G\Orb)^{\op}\to \CAlg(\Pr^{L}_{\st})$ satisfying the following properties:  
 \begin{enumerate}
 \item \label{okgpwgerwgwerg} $\cC$ is limit preserving.
 \item \label{koprgwegerfw} $G$-presentable: For any map $f:T\to S$ between finite $G$-sets, the restriction functors $f^{*}:\cC({S})\to \cC({T})$ have left adjoints $f_{!}$ such that the corresponding Beck-Chevalley maps are equivalences.%conditions.
 \item \label{koprgwegerfw1} $G$-stable: This condition is equivalent to $G$-semiadditivity, meaning that the restriction functors $f^{*}$ also possess right adjoints such that the canonical norm maps $f_{!}\to f_{*}$ are equivalences.
 \item \label{okpwegergweg} $G$-presentably symmetric monoidal: The induction functors $f_{!}$ satisfy the projection formula.
 \end{enumerate}\end{ddd}
 
 Due to Assumption \ref{okgpwgerwgwerg}, the functor $\cC$ is uniquely determined by its restriction
$$\cC_{|G\Orb^{\op}}:G\Orb^{\op}\to  \CAlg(\Pr^{L}_{\st})\ .$$

In Assumption \ref{koprgwegerfw}, we treat
a finite $G$-set $S$ as a presheaf $G\Orb^{\op}\ni X\mapsto \Hom_{G\Set}(X,S)$ on the orbit category.
For any cartesian square 
$$\xymatrix{S\ar[r]^{g}\ar[d]^{l} &T \ar[d]^{f} \\U \ar[r]^{h} &V } $$ in $G\Set^{\fin}$,
there is a canonical Beck-Chevalley map $l_{!}g^{*}\to h^{*}f_{!}:\cC(T)\to \cC(U)$ which is required to be an equivalence. {Here, $G\Set^{\fin}$ is the category of finite sets with $G$-action.}

For the map $f_{!}:S\to *$ of finite $G$-sets, the norm map in Assertion \ref{koprgwegerfw1}
is essentially a transformation $\coprod_{S}=f_{!}\to f_{*}=\prod_{S}$. Consequently, we require that 
coproducts indexed by finite $G$-sets are equivalent to the corresponding products.

 The projection formula in Assumption \ref{okpwegergweg} asserts that, for any map $f:S\to T$ between finite $G$-sets, the natural map $f_{!}(A\otimes f^{*}B)\to f_{!}(A)\otimes B$ is an equivalence for any $A$ in $\cC(S)$ and $B$ in $\cC(T)$.
 
 {In the following, we explain our convention of equivariant $KK$- and $EE$-theory. We define the separable versions $\KK^G_\sepa$ and $\EE^G_\sepa$ as in \cite{KKG} and \cite{Bunke:2026aa}, respectively. The categories $\KK^G$ and $\EE^G$ are defined as the $\Ind_{\aleph_1}$-completion of their separable counterparts.}

 Following \cite{Cnossen:2023aa}:
\begin{prop} There exist $G$-presentably symmetric monoidal $G$-stable $\infty$-categories
$ \KK_{G}$ and $\EE_{G}$  
such that
$\KK_{G}(G/H)\simeq \KK^{H}$ for all subgroups $H$ of $G$ (and similarly for $E$-theory), where the functoriality is induced by the restriction maps.
\end{prop}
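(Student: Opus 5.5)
The plan is to build $\KK_{G}$ (and $\EE_{G}$) first on the orbit category and then extend it to all of $\PSh(G\Orb)^{\op}$ by right Kan extension. This follows the strategy of \cite{Cnossen:2023aa}.

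\textbf{Construction on orbits.} First I would construct a functor
$$G\Orb^{\op}\to \CAlg(\Pr^{L}_{\st})\ ,\qquad G/H\mapsto \KK^{H}\ .$$
On the level of $C^{*}$-algebras, I would use the equivalence $G\Orb\simeq$ the category of transitive $G$-sets. Then I would consider the functor $S\mapsto \nCalg^{G\ltimes S}$, that is, $G$-$C^{*}$-algebras fibred over $S$ (equivalently $C_0(S)$-$G$-algebras). This functor is contravariant in $S$ via pullback, and for $S=G/H$ it is equivalent to $H$-$C^{*}$-algebras by restriction to the fibre over $eH$. Pullback along a map $G/K\to G/H$ corresponds under this identification to the restriction $\Res^{H}_{K}$. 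The advantage of this model is that the functoriality is strict, so it gives an honest functor of $1$-categories.

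\textbf{Passage to $\KK$.} Next I would apply the universal construction of $\KK^{H}_{\sepa}$ from \cite{KKG} fibrewise. Restriction functors are homotopy invariant, stable and split exact, and they are symmetric monoidal for both tensor products. By the universal property of $\kk^{H}_{\sepa}$ as a Dwyer--Kan localization, they therefore descend to symmetric monoidal exact functors $\KK^{H}_{\sepa}\to\KK^{K}_{\sepa}$. These descended functors are coherently functorial because the localization is natural in the input diagram. Applying $\Ind_{\aleph_1}$ then lands in $\CAlg(\Pr^{L}_{\st})$.

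\textbf{Extension and verification of the axioms.} Finally I would right Kan extend along $G\Orb^{\op}\to\PSh(G\Orb)^{\op}$, which makes Assumption~\ref{okgpwgerwgwerg} automatic. The remaining conditions are checked on orbits.
\begin{itemize}
\item[(i)] The left adjoint $f_{!}$ of restriction is induction $\mathrm{Ind}_{K}^{H}(A)=C(H\times_K A)$. The adjunction $\mathrm{Ind}\dashv\Res$ on the level of $KK$ is classical (Green--Julg type). The Beck--Chevalley condition reduces to the Mackey double coset formula $\Res^{H}_{L}\mathrm{Ind}^{H}_{K}\simeq\bigoplus_{LhK}\mathrm{Ind}^{L}_{L\cap {}^{h}K}\Res$, which already holds at the level of algebras.
\item[(ii)] Semiadditivity: induction is also right adjoint to restriction, since for finite index the induced algebra is simultaneously the coinduced algebra. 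I must check that the unit and counit assemble so that the norm map is an equivalence.
\item[(iii)] The projection formula $\mathrm{Ind}(A\otimes\Res B)\cong\mathrm{Ind}(A)\otimes B$ is an isomorphism of $C^{*}$-algebras.
\end{itemize}

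\textbf{Main obstacle.} I expect the hardest part to be coherence. The adjunction data and the norm must be identified with the maps prescribed in Definition~\ref{kopgwgwegw}, and the Beck--Chevalley maps must be shown to be the canonical ones rather than merely abstract equivalences. I would address this by constructing the comparison maps at the level of $C^{*}$-algebras and then using that $\kk$ and $\ee$ preserve the relevant finite sums. Once this holds on objects in the image of $\kk$, it extends to everything by exactness and colimits.
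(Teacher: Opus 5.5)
Your proposal follows essentially the same route as the paper's sketch. Both construct the symmetric monoidal $G$-category $\nCalg_{G}$ (your $S\mapsto \mathrm{Fun}(G\ltimes S,\nCalg)$ is a concrete model of it), Dwyer--Kan localize the separable part at the equivariant $KK$-equivalences using that restrictions preserve them, apply $\Ind_{\aleph_1}$, and check adjoints, Beck--Chevalley and the projection formula at the level of $C^{*}$-algebras. The one refinement in the paper is that it verifies these for the right adjoint $\Coind$: the adjunction $\Res\dashv\Coind$ already exists on $C^{*}$-algebras with $*$-homomorphisms as unit and counit ($b\mapsto(h\mapsto h^{-1}b)$ and evaluation at $e$), so it descends through the localization and the relevant comparison maps are images of algebra-level isomorphisms, which disposes of the coherence issue you flag (for $\Ind\dashv\Res$ the counit is a transfer rather than a $*$-homomorphism, so your version needs slightly more care, e.g.\ passing to mates).
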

\begin{proof}[Sketch of proof]
For concreteness, we focus on $KK$-theory. The argument for $E$-theory is analogous. One
applies the methods developed in \cite{KKG} to the symmetric monoidal $G$-category $$\nCalg_{G}:G\Orb^{\op}\to \CAlg(\Cat)$$
of $C^{*}$-algebras with $\nCalg_{G}(G/H)\simeq H\nCalg$. 
Initially, one considers the $G$-subcategory of separable algebras, which is then Dwyer-Kan localized at the equivariant $KK$-equivalences, followed by {applying the ind completion functor $\Ind_{\aleph_1}$}. {This process yields a well-defined functor $\KK_G:\Orb^\op \to \Calg(\Cat)$, because}  
the restriction functors $K\nCalg\to H\nCalg$ for all subgroups $H\subseteq K\subseteq G$ preserve
equivariant $\KK$-equivalences.
%, the global Dwyer-Kan localization induces
%the Dwyer-Kan localization at the $\KK$-equivalences pointwise at every evaluation. This demonstrates that the resulting functor $\KK_{G}$
%exhibits the correct evaluations. 
To verify $G$-stability, one checks that the $KK$-level restriction functors $\Res^K_H$ admit left and right adjoints $\Ind^K_H$ and $\Coind^K_H$, which naturally happen to be equivalent. The Beck-Chevalley condition and projection formula can be verified for $\Coind$ directly on the level of $C^*$-algebras. 
\end{proof}

{
\begin{rem}
	Here, we deviated slightly from \cite{Cnossen:2023aa}. The difference is that \textit{loc. cit.} works with $\Ind(\KK_\sepa)$ instead of with $ \Ind_{\aleph_1}(\KK_\sepa^G) = \KK^G $. This change  {does not} affect the proof. \hB
\end{rem}
}

%\ben{
%\begin{rem}
%	Instead of applying $\Ind$ in the above proof one can also apply the $\Ind_{\aleph_1}$-completion functor, let us call the resulting $G$-category in the case of $KK$-theory $\widetilde{\KK}_G$ and it's evaluation at the $G$-set $G/H$ by $ \widetilde{\KK}^G $. The main difference between $\widetilde{\KK}^G$ and $ \KK^G $ is that the canonical functor $ \widetilde{\kk}^G: G\nCalg \to \widetilde{\KK}^G $ preserves all sums (see \cite[Remark 3.14.4]{budu}) and $\aleph_1$-filtered colimits, whereas the functor $\kk^G: G\nCalg \to \KK^G$ only preserves $\aleph_1$-filtered colimits. Consequently, typically it seems preferably to work with $\widetilde{\KK}_G$ instead of $\KK_G$.  Nonetheless, we have chosen to work with $\KK_G$ instead of $\widetilde{\KK}_G$ to ensure that this note is compatible with \cite{Cnossen:2023aa} and \cite{KKG}. But  the same arguments presented here for $\KK_G$  can also be applied verbatim to $\widetilde{\KK}_G$ except for the proof of the compactness of tensor unit in $\widetilde{\KK}^G$. In the case of $\KK^G$ the compactness of the tensor unit is a formality, whereas in the case of $\widetilde{\KK}^G$ it follows by the Green--Julg theorem \cite[Theorem 4.25]{KKG}, combined with the compactness of the tensor unit in $\widetilde{\KK}$, and the fact that $-\rtimes G $ preserves all sums. For a detailed analysis of the non-equivariant category $\tilde{\KK}$ we refer the reader to \cite{Bunke:2026aa}. The same discussion also applies to equivariant $E$-theory.
%\end{rem}
%}

Returning to the general setting, we define $\cC^{H}:=\cC(G/H)$ and refer to
 $\cC^{G}:=\cC(G/G)$ as the underlying presentably symmetric monoidal stable $\infty$-category. 
 Let $\beins_{G}$ in $\cC^{G}$ denote the tensor unit.  {In the following, we use terminology for structures associated to $\cC$ that are motivated by the relevant examples $\EE_G$ and $\KK_G$. This in particular applies to the representation ring and the $K$-theory functor introduced below.} 
  
The $G$-category of genuine $G$-spectra $ \Sp_{G}$ is the initial 
 $G$-presentably symmetric monoidal $G$-stable $\infty$-category. Any other $G$-presentably symmetric monoidal $G$-stable $\infty$-category $\cC$ is enriched over
 $\Sp_{G}$. In particular,  {there exists} a bifunctor
 $$ \underline{\map}_{\cC}:\cC^{\op}\times \cC\to   \Sp_{G}\ .$$
 %Inserting the tensor unit in the first slot we get a lax symmetric monoidal right-adjoint the functor
 %$$\underline{K}:\cC\to \Sp_{G}\ .$$
 The evaluation
   $\Sp^{G}:= \Sp_{G}(*)$ represents the classical category of genuine $G$-spectra (or spectral $G$-Mackey functors),
   and we define the representation ring as
   $$R(G):= \underline{\map}_{\cC}(\beins_{G},\beins_{G})(*)\in \CAlg(\Sp^{G})\ .$$
This yields a 
 $G$-presentably symmetric monoidal $G$-stable $\infty$-category
 $  \Mod_{ R(G)}(\Sp_{G})$ alongside a $G$-limit preserving refinement
$$\underline{K}:\cC\to \Mod_{ R(G)}(\Sp_{G})$$
of the functor 
$\underline{\map}(\beins_{G},-)$.
Let $$K^{G}:\cC^{G}\to \Mod_{ R(G)}(\Sp_{G})^{G}:= \Mod_{ R(G)}(\Sp_{G})(*)$$ be the evaluation of $\underline{K}$ at $*$.
The functor $\underline{K}$ satisfies the following properties:
\begin{enumerate}
\item We have $\underline{K}(\beins_{G})\simeq R(G)$, and for any $A$ in $\cC^{G}$, the map induced by 
$$K^{G}(A)\simeq  \underline{\map}_{\cC}(\beins_{G},A)(*)\to \underline{\map}_{ \Mod_{R(G) }(\Sp_{G})}(R(G),\underline{K}(A))(*)\simeq K^{G}(A)$$ is the identity.
%\item More generally we have $$  \underline{\map}(\beins_{G},A)(*)\stackrel{\simeq}{\to} \map_{ \Mod_{\Sp^{G}}(R(G))}(R(G), K (A))\ .$$
\item If $\beins_{G}$ is compact in $\cC^{G}$, then $\underline{K}$ preserves $G$-colimits.
\end{enumerate}

\begin{ddd} We define the bootstrap class $\cB$ to be the $G$-subcategory of $ \cC$ generated by $\beins_{G}$ under $G$-colimits and shifts. 
\end{ddd}Then $\cB^{G}:=\cB(*)$ is generated by $f_{!}f^{*}\beins_{G}$ for all maps $f:S\to *$ originating from finite $G$-sets  {as a localizing subcategory}.   
Using the aforementioned properties, arguments analogous to the proof of \cref{kopgwgrewfrw}
yield:
\begin{theorem}\mbox{} Assume that $\beins_{G}$ is compact.
\begin{enumerate}
  \item  \label{kogwperfwf} For $A$ in $\cB^{G}$ and $B$ in $\cC^{G}$, we have an equivalence
$$ \underline{\map}_{\cC}(A,B)\simeq \underline{\map}_{\Mod_{R(G)}(\Sp_{G})}(\underline{K}(A),\underline{K}(B)) \ .$$
\item \label{kgowperwrfwerfwef} The functor $\underline{K}$ restricts to an equivalence $\underline{K}_{|\cB}:\cB\stackrel{\simeq}{\to} \Mod_{R(G)}(\Sp_{G})$.
 \end{enumerate}
\end{theorem}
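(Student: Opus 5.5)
The plan is to mirror the proof of \cref{kopgwgrewfrw} in the parametrized setting, replacing colimits by $G$-colimits and the ordinary mapping spectrum by the $\Sp_{G}$-valued one. First, for Assertion \ref{kogwperfwf} with $A=\beins_{G}$, I use the first listed property of $\underline{K}$. Evaluated at $*$, the comparison map
$$\underline{\map}_{\cC}(\beins_{G},B)(*)\to \underline{\map}_{\Mod_{R(G)}(\Sp_{G})}(R(G),\underline{K}(B))(*)$$
is the identity of $K^{G}(B)$. To get the equivalence in $\Sp_{G}$, and not only at $*$, I evaluate at every $G/H$. By the enrichment, the value at $G/H$ is the mapping spectrum in $\cC^{H}$ after restriction, and restriction $f^{*}$ along $f:G/H\to *$ is compatible with $\underline{K}$, since $\underline{K}$ is a $G$-functor. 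So the same identity argument applies in $\cC^{H}$ with $\beins_{H}=f^{*}\beins_{G}$ and $R(H)=f^{*}R(G)$. Alternatively, the value at $G/H$ can be identified with the value at $*$ on $f_{!}f^{*}\beins_{G}$ by adjunction, which reduces it to the next step.

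Next, I extend from $\beins_{G}$ to all of $\cB^{G}$. Both sides of the comparison, viewed as functors of $A$, send $G$-colimits in the first variable to $G$-limits. On the left this holds by definition of the enriched mapping object. On the right it holds because $\underline{K}$ preserves $G$-colimits, using the second listed property and the compactness of $\beins_{G}$. The class of $A$ for which the comparison is an equivalence is therefore closed under shifts and $G$-colimits. Concretely, it is closed under colimits in $\cC^{G}$ and under $f_{!}f^{*}$ for $f:S\to *$. The $f_{!}$-closure is where the Beck--Chevalley and projection/adjunction compatibilities of $\underline{K}$ with $f_{!}$ enter: $\underline{K}$ commutes with $f_{!}$ and $f^{*}$. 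Since $\cB^{G}$ is generated as a localizing subcategory by the objects $f_{!}f^{*}\beins_{G}$, Assertion \ref{kogwperfwf} follows. The same argument applied at each level $G/H$ gives the parametrized statement for all of $\cB$.

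For Assertion \ref{kgowperwrfwerfwef}, full faithfulness of $\underline{K}_{|\cB}$ is Assertion \ref{kogwperfwf}, since applying $\Omega^{\infty}$ at each level recovers the mapping spaces in $\cC^{H}$. For essential surjectivity, the image of $\underline{K}_{|\cB}$ is a full $G$-subcategory containing $\underline{K}(\beins_{G})\simeq R(G)$ and closed under $G$-colimits and shifts, again because $\underline{K}$ preserves $G$-colimits. It remains to use that $\Mod_{R(G)}(\Sp_{G})$ is generated under $G$-colimits by $R(G)$. At level $*$ this means it is generated under colimits by the objects $f_{!}f^{*}R(G)\simeq R(G)\otimes \Sigma^{\infty}_{+}S$. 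This holds because $\Sp^{G}$ is generated by the suspension spectra of orbits $\Sigma^{\infty}_{+}G/H$, and the free functor $R(G)\otimes-$ carries generators to generators.

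I expect the main obstacle to be the bookkeeping needed to show that $\underline{K}$ commutes with the induction functors $f_{!}$. This is what allows the reduction from the $\Sp_{G}$-valued statement to statements about $f_{!}f^{*}\beins_{G}$. I would derive it from $G$-semiadditivity, which gives $f_{!}\simeq f_{*}$, together with the fact that $\underline{K}$, being a $G$-right adjoint, commutes with $f_{*}$.
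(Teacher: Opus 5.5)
Your proposal is correct and follows essentially the paper's route. The paper only says the proof is analogous to that of \cref{kopgwgrewfrw}: the base case $A=\beins_{G}$ comes from the ``identity'' property of $\underline{K}$, the extension to $\cB$ runs along $G$-colimits using that $\underline{K}$ preserves them when $\beins_{G}$ is compact, and essential surjectivity follows from $\underline{K}(\beins_{G})\simeq R(G)$. You make explicit the parametrized bookkeeping the paper leaves implicit: checking at every $G/H$, closure under $f_{!}f^{*}$ via $G$-semiadditivity, and generation of $\Mod_{R(G)}(\Sp_{G})$ by the objects $R(G)\otimes\Sigma^{\infty}_{+}G/H$.
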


By evaluating these equivalences at $*$, we arrive at the following corollary via the same logic applied in \cref{kopgwgrewfrw}: \begin{kor}\label{jigjowgsfgfdgsfdg}\mbox{}
\begin{enumerate}
  \item  \label{kogwperfwf} For $A$ in $\cB^{G}$ and $B$ in $\cC^{G}$, we have an equivalence
$$ \underline{\map}_{\cC}(A,B)(*)\simeq \map_{\Mod_{R(G)}(\Sp^{G})^{G}}(K^{G}(A),K^{G}(B))$$ in $\Mod_{R(G)}(\Sp^{G})^{G}$.
\item \label{kgowperwrfwerfwef} The functor $  K^{G}$ induces an equivalence $K^{G}_{|\cB}:\cB^{G}\stackrel{\simeq}{\to} \Mod_{R(G)}(\Sp^{G})^{G}$.
\item The functor $  K^{G}$ is the right adjoint of a  {right Bousfield localization} 
  $$b: \Mod_{R(G)}(\Sp^{G})^{G}\rightleftarrows \cC^{G}:K^{G}\ .$$
\item For any $A$ in $\cB^{G}$ and $B$ in $\cC^{G}$, the canonical map
$$K^{G}(A)\otimes_{R(G)} K^{G}(B)\to K^{G}(A\otimes B)$$ is an equivalence.
 \end{enumerate}
\end{kor}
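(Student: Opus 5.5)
The plan is to deduce each of the four assertions of \cref{jigjowgsfgfdgsfdg} from the preceding theorem by evaluating at $*$. Where needed, I will rerun the argument of \cref{kopgwgrewfrw} inside $\cC^{G}$, using the generators $f_{!}f^{*}\beins_{G}$ of $\cB^{G}$.

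\textbf{Assertion 1.} Evaluate the equivalence of part \ref{kogwperfwf} of the theorem at $*$. The left side becomes $\underline{\map}_{\cC}(A,B)(*)$. For the right side I use that, in any $G$-category enriched over $\Sp_{G}$ (here $\Mod_{R(G)}(\Sp_{G})$ over itself), the evaluation at $*$ of the internal mapping object $\underline{\map}(M,N)$ is the mapping object of the underlying category $\Mod_{R(G)}(\Sp_{G})^{G}$, regarded as an $R(G)$-module in $\Sp^{G}$. This is the genuine-$G$-spectrum valued enrichment of $\Mod_{R(G)}(\Sp^{G})^{G}$ over itself, so evaluating yields exactly the displayed equivalence.

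\textbf{Assertion 2.} Evaluate the equivalence $\underline{K}_{|\cB}\simeq$ at $*$; this gives $\cB^{G}\simeq \Mod_{R(G)}(\Sp_{G})^{G}$.

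\textbf{Assertion 3.} $K^{G}$ is corepresented by $\beins_{G}$, so it preserves limits. Since $\beins_{G}$ is compact, it also preserves colimits, and by presentability it has a left adjoint $b$. By Assertion 1, applied with $A=f_{!}f^{*}\beins_{G}$, the unit $M\to K^{G}b(M)$ is an equivalence on the generators $K^{G}(f_{!}f^{*}\beins_{G})\simeq f_{!}f^{*}R(G)$. Here I use that $\underline{K}$ commutes with $f_{!}$ and $f^{*}$, because it is $G$-colimit preserving and restriction-compatible. Both $K^{G}b$ and the identity preserve colimits, and these objects generate $\Mod_{R(G)}(\Sp^{G})^{G}$, so the unit is an equivalence everywhere. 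Hence $b$ is fully faithful with image $\cB^{G}$, which is a right Bousfield localization.

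\textbf{Assertion 4.} For $A=\beins_{G}$ the map is the identity. For $A=f_{!}f^{*}\beins_{G}$ I use the projection formula in both $\cC$ and $\Mod_{R(G)}(\Sp_{G})$:
\[
f_{!}f^{*}\beins_{G}\otimes B\simeq f_{!}f^{*}B ,
\]
together with the compatibility of $\underline{K}$ with $f_{!}$ and $f^{*}$ and the fact that $\underline{K}$ is lax symmetric monoidal as a $G$-functor. This reduces the claim to the case of the unit over $S$. Both sides are exact and colimit preserving in $A$, so the claim then extends to all of $\cB^{G}$.

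The main obstacle I expect is the bookkeeping in Assertions 3 and 4. One has to check that the natural comparison maps are compatible with $f_{!}$, that is, that the lax monoidal structure of $\underline{K}$ interacts correctly with induction. I would handle this by working parametrically in the $G$-category, where $f_{!}$ is a $G$-colimit, and then evaluating at $*$.
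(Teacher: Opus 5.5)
Your proposal is correct and follows the paper's route. Assertions 1 and 2 come from evaluating the parametrized theorem at $*$, and Assertions 3 and 4 from rerunning the argument of \cref{kopgwgrewfrw} with the generators $f_{!}f^{*}\beins_{G}$ of $\cB^{G}$ and extension along colimits; your explicit use of the projection formula and of the compatibility of $\underline{K}$ with $f_{!}$ and $f^{*}$ only spells out what the paper leaves under ``the same logic''.
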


We now focus specifically on the case where $\cC:=\KK_{G}$.  {We know that $\beins_G$ is compact, because ${\beins}$ in $\KK$ is compact, we have the equivalence $\map_{\KK^G}({\beins}_G, -) \simeq \map_{\KK}({\beins}, - \rtimes G)$ by the  the Green-Julg theorem \cite[Theorem 4.25]{KKG}, and since the functor ${-}\rtimes G{:\KK^{G}\to \KK}$ preserves sums.}
%We know that $\beins_{G}$ is compact \cite{MeyerNest2006}. 
Let $KU^{G}:=R(G)$ in $\Sp^{G}$ denote the genuine $G$-equivariant $K$-theory spectrum.
  \cref{jigjowgsfgfdgsfdg} states that
the $K$-theory functor $$K^{G}:\KK^{G}\to \Mod_{KU^{G}}(\Sp^{G})(KU^{G})^{G}$$ identifies the bootstrap class $\cB^{G}$ with the $KU^{G}$-modules in genuine $G$-spectra. Thus, the bootstrap class is generated by the objects $\Ind_{H}^{G}(\Res_{H}^{G}(\beins_{G}))$ for all subgroups $H$ of $G$.
The universal coefficient theorem states that
$$\KK^{G}(A,B)\simeq \map_{\Mod_{R(G)}(\Sp^{G})}(K^{G}(A),K^{G}(B))$$ for $A$ in the bootstrap class $\cB^{G}$ and $B$ in $\KK^{G}$, where the right-hand side is the genuine mapping spectrum in
$R(G)$-modules between the genuine $K$-theory spectra of $A$ and $B$.

 We now observe that $\KK^{G}$ is in fact a module category over $\KK$. In \cref{kopgwgwegw},
 we can substitute $ \CAlg(\Pr^{L}_{\st})$ with $\CAlg(\Mod_{KK}(\Pr^{L}_{\st}))$ and regard
 $ \KK_{G}$ as a $G$-presentably symmetric monoidal $G$-stable $\KK$-module category.
 This yields a refined representation ring $\smash{\tilde{KU}^{G}}$ in $\Sp^{G}\otimes \KK$ (or, expressed differently, a commutative algebra in $\KK$-valued Mackey functors). The restriction of the corresponding $K$-theory functor
 $$\tilde K^{G}:\KK^{G}\to \Mod_{\smash{\tilde{KU}^{G}}}(\Sp^{G}\otimes \KK)$$ to the generalized bootstrap class $\tilde \cB^{G}$
 is an equivalence, where $\tilde \cB^{G}$ is generated by $\Ind_{H}^{G}(\ben{A})$, where $A$ is an algebra with trivial $H$-action,  for all subgroups $H$ of $G$.
 
 \begin{prob}
 Is the functor $ \tilde K^{G}: \KK^{G}\to   \Mod_{\smash{\tilde{KU}^{G}}}(\Sp^{G}\otimes \KK)  $ an equivalence?
 \end{prob}
  
 An equivalent formulation of this problem is:
 \begin{prob}
 Do there exist objects {$A \not \simeq 0$} in $\KK^{G}$ such that $\Res^{G}_{H}(A)\rtimes_{r}H\simeq 0$ in $\KK$ for all subgroups $H$ of $G$?
 \end{prob}

%
%
%Then $\underline{K}_{|\cB}:\cB\to  \Mod_{\Sp_{G}}(R(G))$ is an equivalence.
%In particular, it induces an equivalence $$\cB^{G}\stackrel{\simeq}{\to}   \Mod_{\Sp_{G}}(R(G))^{G}\simeq \Mod_{\Sp^{G}}(R(G))$$
%and we have a right Bousfield localization
%$$b:\cB^{G}\rightleftarrows: \cC^{G}:K\ .$$
%
%Replacing $\Pr^{L}_{\st}$ by $\Mod_{\Pr^{L}_{\st}}(\KK)$ one can obtain an 
%equivalence  $$\tilde \cB^{G}\stackrel{\simeq}{\to}   \Mod_{\Sp_{G}\otimes \KK}(\tilde R(G))^{G}\simeq \Mod_{\Sp^{G}\otimes \KK }(\tilde R(G))$$
%(and $\Sp^{G}\otimes \KK$ are $\KK$-valued Mackey functors)
%$\tilde \cB^{G}$ is generated by $B\otimes A$ with $B$ in $\cB^{G}$ and $A$ in $KK$
%and an 
%adjunction
%$$\tilde b:\tilde \cB^{G}\rightleftarrows \cC^{G}:\tilde K$$
% 
%
%
%\cite{Cnossen:2024aa},\cite{Cnossen:2023aa}
%

    \bibliographystyle{alpha}
\bibliography{forschung2021}

\end{document}